\def \k {\mathbbm{k}}
\def \End {\operatorname{End}}
\def \V {\operatorname{V}_{n,d}}
\def \dim {\operatorname{dim}}
\def \rk {\operatorname{Rank}}
\def \v {\operatorname{v}}
\def \GL {\operatorname{GL}}
\def \Im {\operatorname{Im}}
\def \Q {\mathbbm{Q}}
\numberwithin{equation}{section}
\numberwithin{table}{section}
\numberwithin{equation}{section}
\newtheorem{theorem}{Theorem}[section]
\newtheorem{lemma}[theorem]{Lemma}
\newtheorem{proposition}[theorem]{Proposition}
\newtheorem{corollary}[theorem]{Corollary}
\newtheorem{definition}[theorem]{Definition}
\newtheorem{example}[theorem]{Example}
\newtheorem{algorithm}[theorem]{Algorithm}
\newtheorem{remark}[theorem]{Remark}
\newtheorem{remarks}[theorem]{Remarks}
\title[Centers and Decompositions of Forms]{On Centers and Direct Sum Decompositions \\ of Higher Degree Forms}
\thanks{Supported by NSFC 11911530172, 11971181 and 11971449.}
\subjclass[2010]{15A69, 11E76, 14J70}
\keywords{higher degree form, direct sum decomposition, center}
\author{Hua-Lin Huang, Huajun Lu, Yu Ye and Chi Zhang}
\address{School of Mathematical Sciences, Huaqiao University, Quanzhou 362021, China}
\email{hualin.huang@hqu.edu.cn}
\address{School of Mathematical Sciences, Huaqiao University, Quanzhou 362021, China}
\email{huajun@hqu.edu.cn}
\address{School of Mathematical Sciences, University of Science and Technology of China, Hefei 230026, China}
\email{yeyu@ustc.edu.cn}
\address{Department of Mathematics, Northeastern University, Shenyang 110819, China}
\email{zhangchi@mail.neu.edu.cn}
\begin{document}

\begin{abstract}
Higher degree forms are homogeneous polynomials of degree $d > 2,$ or equivalently symmetric $d$-linear spaces. This paper is mainly concerned about the algebraic structure of the centers of higher degree forms with applications specifically to direct sum decompositions, namely expressing higher degree forms as sums of forms in disjoint sets of variables. We show that the center algebra of almost every form is the ground field, consequently almost all higher degree forms are absolutely indecomposable. If a higher degree form is decomposable, then we provide simple criteria and algorithms for direct sum decompositions by its center algebra. It is shown that the direct sum decomposition problem can be boiled down to some standard tasks of linear algebra, in particular the computations of eigenvalues and eigenvectors. We also apply the structure results of center algebras to provide a complete answer to the classical problem of whether a higher degree form can be reconstructed from its Jacobian ideal.
\end{abstract}

\maketitle

\section{Introduction}
One of the central problems of classical invariant theory is the equivalence of higher degree forms under linear changes of variables. The so-called direct sum decomposition of a higher degree form is expressing it as a sum of two or more forms in disjoint sets of variables, possibly after a reversible linear change of variables. This is a natural step of dimension reduction as it provides the separation of variables. Direct sum decomposition of higher degree forms also plays important roles in many problems of such subjects as commutative algebra, geometric invariant theory, multilinear algebra and computational complexity.

In his pioneering work \cite{h1}, Harrison initiated to generalize Witt's algebraic theory of quadratic forms to the higher degree situation. One of his important results is that any nondegenerate higher degree form admits a unique decomposition into a direct sum of indecomposable forms. In his research, the notion of centers of higher degree forms is crucial. Centers of higher degree forms, possibly in different terminologies, with connection to their direct sum decompositions were rediscovered independently several times by other authors, for example \cite{p, cg, r, k}.

Direct sum decompositions of homogeneous polynomials are also considered in the Sebastiani-Thom type problems of algebraic geometry, where a decomposable polynomial is called of Sebastiani-Thom type. In \cite{w}, some sufficient conditions are provided for the direct sum decomposability of homogeneous polynomials via their Jacobian ideals. Recently, direct sum decompositions of higher degree forms are approached by apolarity as well, see for example \cite{k, bbkt, sh}. In \cite{f1}, the direct sum decomposition of a smooth form is interpreted in terms of the product factorization of its associated form and an algorithm for computing direct sum decompositions is provided. In these aforementioned works, criteria and algorithms of direct sum decompositions of higher degree forms involve sophisticated tools and are computationally expensive.

The main aim of the present paper is to stress that the theory of Harrison's centers is highly effective for direct sum decompositions of higher degree forms both theoretically and computationally. There are quite a few research works (see e.g. \cite{h2,keet,os1,os2,pu,s}) along the line of \cite{h1}, however general algebraic structures of centers and algorithms for direct sum decompositions via centers were not systematically pursued before. We show that almost all forms have trivial center (i.e. isomorphic to the ground field), that means are \emph{a priori} absolutely indecomposable. We prove that the center algebra of a higher degree form is semisimple if and only if the form is not a limit of direct sums forms \cite{bbkt}. For forms with semisimple center, we give an elementary criterion for the direct sum decomposability which is equivalent to computing the rank of a finite set of vectors. It can be observed that smooth forms have semisimple centers, so our criterion includes and highly simplifies those cases treated in \cite{bbkt,f1,w}. Our structure results of center algebras can also be applied to provide a complete linear algebraic answer to the classical problem of whether higher degree forms can be reconstructed from their Jacobian ideal. The latter is very important in the Torelli problem, see e.g. \cite{cg, ron}. Moreover, we show through a simple algorithm that direct sum decompositions for arbitrary higher degree forms can be boiled down to some standard tasks of linear algebra, specifically the computations of eigenvalues and eigenvectors for which there are efficient algorithms and well-established softwares. Some complicated examples treated in \cite{f1, bbkt, sh}, partially involving computers, are easily rehandled by hand via this approach.

The rest of the paper is organized as follows. Section 2 is devoted to some preliminaries on centers and direct sum decompositions of higher degree forms. Main results are presented in Section 3. We conclude the paper with some examples in Section 4. Throughout, let $d \ge 3$ be an integer and let $\k$ be a field with $\operatorname{char} \k = 0,$ or $\operatorname{char} \k > d.$

\section{Preliminaries}
A form of degree $d$ in $n$ variables is an element of the polynomial ring $\k[x_1, \cdots, x_n]$ which is a sum of monomials of degree $d.$ A form $f$ is called a direct sum if, after an invertible linear change of variables, it can be written as a sum of $t \ge 2$ forms in disjoint sets of variables as follows
\begin{equation} \label{eds} f=f_1(x_1, \cdots, x_{a_1}) + \cdots + f_t(x_{a_{t-1}+1}, \cdots, x_n). \end{equation}
If this is not the case, then $f$ is said to be indecomposable. On the other extreme, if the $f_i$'s are forms in only one variable, then $f$ is said to be diagonalizable.

For convenience, a general form of degree $d$ in $n$ variables is written in the symmetric way:
\begin{equation} \label{et}
f(x_1, \cdots, x_n) = \sum_{1 \le i_1, \cdots, i_d \le n} a_{i_1 \cdots i_d} x_{i_1} \cdots x_{i_d}
\end{equation}
where the $a_{i_1 \cdots i_d}$'s are symmetric in the sense that they remain unchanged under any permutation of their subscripts. The resulting symmetric $d$-tensor $A=(a_{i_1 \cdots i_d})_{1 \le i_1, \cdots, i_d \le n}$ is called the symmetric tensor of $f.$ One can write the form $f(x_1, \cdots, x_n)=A x^d$ in terms of products of tensors, where $x=(x_1, \cdots, x_n)^T$ is the vector of variables. If $y=Px$ with $P=(p_{ij}) \in \GL(n,\k)$ is a change of variables, then the resulting form \[ g(y_1, \cdots, y_n)=\sum_{1 \le j_1, \cdots, j_d \le n} \ \sum_{1 \le i_1, \cdots, i_d \le n} a_{i_1 \cdots i_d}p_{i_1j_1} \cdots p_{i_dj_d}y_{j_1} \cdots y_{j_d} \] and the associated symmetric tensor becomes \[ AP^d:= \left( \sum_{1 \le i_1, \cdots, i_d \le n} a_{i_1 \cdots i_d}p_{i_1j_1} \cdots p_{i_dj_d} \right)_{1 \le j_1, \cdots, j_d \le n}.\] Call $AP^d$ the $d$-congruence of $A$ by $P.$ It is clear that $f$ is a direct sum if and only if its symmetric tensor $A$ is $d$-congruent to a block diagonal tensor with at least $2$ nonzero blocks, while $f$ is diagonalizable if and only if $A$ is $d$-congruent to a diagonal tensor. In this paper we mainly treat higher degree forms and leave the corresponding results of symmetric tensors to the interested reader.

Corresponding to forms of degree $d$ there are associated symmetric $d$-linear spaces. Let $V$ be a vector space over $\k$ of dimension $n$ with a basis $e_1, \cdots, e_n.$ Define $\Theta \colon V \times \cdots \times V \longrightarrow \k$ by $\Theta(e_{i_1}, \cdots, e_{i_d})=a_{i_1 \cdots i_d}.$ The pair $(V, \Theta)$ is called the associated symmetric $d$-linear space of $f$ under the basis $e_1, \cdots, e_n.$ One can recover the form $f$ from $(V, \Theta)$ as \[ f(x_1, \cdots, x_n) = \Theta\left(\sum_{1 \le i \le n}x_ie_i, \ \dots, \ \sum_{1 \le i \le n}x_i e_i\right).\] Nonzero subspaces $V_1, \cdots, V_t$ of $(V,\Theta)$ are said to be orthogonal, if $\Theta(v_1, \cdots, v_d)=0$ unless all the $v_i$'s are in the same $V_s$ for some $1 \le s \le t.$ If $V=V_1 \oplus \cdots \oplus V_t$ for $t \ge 2$ nonzero orthogonal subspaces, then call $(V,\Theta)$ decomposable. Otherwise, call $(V,\Theta)$ indecomposable. Clearly, the orthogonal decompositions of $(V,\Theta)$ are in bijection with the direct sum decompositions of its associated form $f.$

A form $f$ is said to be nondegenerate, if no variable can be removed by an invertible linear change of variables. This is equivalent to saying, in terms of symmetric $d$-linear spaces, that $\Theta(u, v_2, \cdots, v_d)=0$ for all $v_2, \cdots, v_d \in V$ implies $u=0.$ For the associated symmetric $d$-tensor $A,$ let $A_{i_1}$ denote the $(d-1)$-tensor $A=(a_{i_1 \cdots i_d})_{1 \le i_2, \cdots, i_d \le n}.$ Then $f$ is nondegenerate if and only if the $A_{i_1}$'s are linearly independent in the space of $(d-1)$-tensors. Moreover, the form $f$ involves essentially $\rk \{ A_1, \cdots, A_n \}$ variables. See \cite{hlyz} for more details.

According to \cite{h1, h2}, the center of a higher degree form can be defined in the following three equivalent ways. Suppose $f$ is a form of degree $d$ in $n$ variables. By $H$ we denote its Hessian matrix $(\frac{\partial^2 f}{\partial x_i \partial x_j})_{1 \le i,\ j \le n}.$ Then the center of $f$ is defined as
\begin{equation}\label{hessian}
 Z(f):=\{ X \in \k^{n \times n} \mid (HX)^T =  HX \}.
\end{equation}
Let $A=(a_{i_1 \cdots i_d})_{1 \le i_1, \cdots, i_d \le n}$ be the associated symmetric $d$-tensor of $f$ and $A^{(i_3 \cdots i_d)}$ the $n \times n$ matrix $(a_{i_1i_2i_3 \cdots i_d})_{1 \le i_1, i_2 \le n}.$ Then the center of $A$ is defined by
\begin{equation}\label{ec}
 Z(A):= \{ X \in \k^{n \times n} \mid X^TA^{(i_3 \cdots i_d)}=A^{(i_3 \cdots i_d)}X, \ \forall 1 \le i_3, \cdots, i_d \le n \}.
\end{equation}
In terms of the associated symmetric $d$-linear space, the center is defined as
\begin{equation}
Z(V,\Theta):=\{ \phi \in \operatorname{End}(V) \mid \Theta(\phi(v_1), v_2, \cdots, v_d)=\Theta(v_1, \phi(v_2), \cdots, v_d), \ \forall v_1, v_2, \cdots, v_d \in V \}.
\end{equation}

The following are some useful facts about centers and direct sum decompositions of higher degree forms obtained in \cite{h1}.

\begin{proposition} \label{pc}
Suppose $f$ is a nondegenerate form of degree $d$ in $n$ variables. Then
\begin{itemize}
\item[(1)] The center $Z(f)$ is a commutative subalgebra of the full matrix algebra $\k^{n \times n}.$
\item[(2)] There is a one-to-one correspondence between direct sum decompositions of $f$ and complete sets of orthogonal idempotents of $Z(f).$
\item[(3)] The decomposition of $f$ into a direct sum of indecomposable forms is unique up to equivalence and permutation of indecomposable summands.
\item[(4)] If $K/\k$ is a field extension, by $f_K$ it is meant treating $f \in K[x_1, \cdots, x_n],$ then $Z(f_K) \cong Z(f) \otimes_\k K.$
\end{itemize}
\end{proposition}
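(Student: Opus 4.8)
Here is how I would approach Proposition~\ref{pc} (these facts are classical, going back to Harrison~\cite{h1}). The plan is to work entirely inside the associated symmetric $d$-linear space $(V,\Theta)$, using the description
\[ Z(f)=\{\phi\in\End(V)\mid \Theta(\phi v_1,v_2,\cdots,v_d)=\Theta(v_1,\phi v_2,\cdots,v_d)\ \text{ for all }v_i\in V\}, \]
which, by the symmetry of $\Theta$, says that for $\phi\in Z(f)$ the scalar $\Theta(\phi v_1,v_2,\cdots,v_d)$ does not change when the operator $\phi$ is transferred to any one of the $d$ slots. Both the standing hypothesis $d\ge 3$ and nondegeneracy will be used, each at a single point of part~(1), so they should be kept in sight.

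For part~(1): $Z(f)$ is visibly a $\k$-subspace of $\End(V)$ containing the identity, so the real content is closure under composition and commutativity. Given $\phi,\psi\in Z(f)$, I would transfer $\phi$ from the first slot to the third, then $\psi$ from the first slot to the second, then $\phi$ from the third slot back to the second, obtaining $\Theta(\phi\psi v_1,v_2,\cdots,v_d)=\Theta(v_1,\phi\psi v_2,v_3,\cdots,v_d)$; this is exactly the transfer property for $\phi\psi$, so $\phi\psi\in Z(f)$, and it is the only step that needs a third slot. A shorter computation — transfer $\phi$ and then $\psi$, each from the first slot to the second — gives instead $\Theta(\phi\psi v_1,v_2,\cdots,v_d)=\Theta(v_1,\psi\phi v_2,v_3,\cdots,v_d)$; subtracting the two identities yields $\Theta(v_1,(\phi\psi-\psi\phi)v_2,v_3,\cdots,v_d)=0$ for all arguments, and nondegeneracy (applied in the second slot, which by symmetry is the same condition) forces $\phi\psi=\psi\phi$. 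I expect this commutativity to be the only genuinely non-formal point, and the place where the hypotheses cannot be relaxed.

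For part~(2): I would invoke the bijection, already recorded above, between direct sum decompositions of $f$ and orthogonal decompositions $V=V_1\oplus\cdots\oplus V_t$ of $(V,\Theta)$. From such a decomposition, the projections $\pi_i$ satisfy $\Theta(\pi_i v_1,v_2,\cdots,v_d)=\Theta(v_1^{(i)},v_2^{(i)},\cdots,v_d^{(i)})$ by orthogonality (writing $v^{(i)}$ for the $V_i$-component), a quantity symmetric in the slots, so $\pi_i\in Z(f)$ and $\{\pi_1,\cdots,\pi_t\}$ is a complete set of orthogonal idempotents. Conversely, from a complete set of orthogonal idempotents $\varepsilon_1,\cdots,\varepsilon_t$ of $Z(f)$ I would set $V_i=\varepsilon_i V$, so $V=\bigoplus_i V_i$; for $v_1\in V_s$ and $v_2\in V_{s'}$ with $s\ne s'$ one computes $\Theta(v_1,v_2,v_3,\cdots,v_d)=\Theta(\varepsilon_s v_1,v_2,\cdots,v_d)=\Theta(v_1,\varepsilon_s\varepsilon_{s'}v_2,v_3,\cdots,v_d)=0$, so the $V_i$ are orthogonal, and the two constructions are manifestly mutually inverse.

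For parts~(3) and~(4): part~(4) is pure linear algebra — $Z(f)$ is the space of $\k$-solutions of the linear system $X^TA^{(i_3\cdots i_d)}=A^{(i_3\cdots i_d)}X$ in the $n^2$ entries of $X$, the rank of such a system is unchanged by the extension $K/\k$ (it is read off from minors), and the symmetric tensor of $f_K$ is literally that of $f$, so $\dim_K Z(f_K)=\dim_\k Z(f)$; since the natural map $Z(f)\otimes_\k K\to Z(f_K)$ is injective and preserves matrix multiplication, a dimension count makes it an isomorphism of algebras. For part~(3), the key lemma is that for an idempotent $\varepsilon$ of $Z(f)$ with $W=\varepsilon V$, restriction yields an algebra isomorphism $Z(f|_W)\cong\varepsilon Z(f)$: every $\phi\in Z(f)$ commutes with $\varepsilon$ by part~(1) and so stabilizes $W$, while conversely an element of $Z(f|_W)$, extended by zero on the complementary summand, lies in $Z(f)$ by the orthogonality used in part~(2); hence $W$ is indecomposable exactly when $\varepsilon$ is a primitive idempotent of $Z(f)$. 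Finally, $Z(f)$ being a finite-dimensional commutative $\k$-algebra, it decomposes uniquely as a product $\prod_{i=1}^{r}\varepsilon_i Z(f)$ of local algebras — equivalently, its complete set of primitive orthogonal idempotents $\{\varepsilon_1,\cdots,\varepsilon_r\}$ is uniquely determined — and by part~(2) this is precisely the statement that the decomposition of $f$ into indecomposable forms is unique up to permutation of summands and, once coordinates are chosen on each summand, up to equivalence.
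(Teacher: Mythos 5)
Your proposal is correct. For item (2) — the only item the paper actually proves — your argument is essentially the paper's: projections onto the summands of an orthogonal decomposition lie in $Z(f)$ by the vanishing condition \eqref{o} plus multilinearity, and conversely a complete set of orthogonal idempotents yields an orthogonal decomposition $V=\bigoplus \epsilon_i(V)$; your verification that $\Theta(v_1,v_2,\cdots,v_d)=\Theta(v_1,\epsilon_s\epsilon_{s'}v_2,\cdots,v_d)=0$ for $v_1\in V_s$, $v_2\in V_{s'}$, $s\ne s'$, is in fact slightly more explicit than the paper, which only asserts that \eqref{o} ``follows by the definition of centers.'' For items (1), (3) and (4) the paper simply cites \cite{h1}, so here you supply arguments the paper omits; they are sound and are the classical ones: the slot-transfer computation for closure under composition (which is exactly where $d\ge 3$ enters) and for commutativity (where nondegeneracy enters), the identification $Z(f|_W)\cong\varepsilon Z(f)$ reducing uniqueness of the indecomposable decomposition to uniqueness of the complete set of primitive orthogonal idempotents of a finite-dimensional commutative algebra, and the rank-of-a-linear-system argument for base change. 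The one small point you should make explicit in item (3) is that a direct summand $W=\varepsilon V$ of a nondegenerate $(V,\Theta)$ is again nondegenerate (immediate from orthogonality, since $\Theta(u,v_2,\cdots,v_d)=\Theta(u,\varepsilon v_2,\cdots,\varepsilon v_d)$ for $u\in W$), as this is needed to apply item (2) to $f|_W$ when equating indecomposability of $W$ with primitivity of $\varepsilon$.
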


\begin{proof}
For later applications, we include a proof of item (2). Proofs of other items can be found in \cite{h1}. It is enough to prove the correspondence between direct sum decompositions into two terms and complete sets of pairwise orthogonal idempotents. This can be easily extended to the general situation.

Suppose $f(x_1, \cdots, x_n)=f_1(x_1, \cdots, x_a)+f_2(x_{a+1}, \cdots, x_n)$ is a direct sum decomposition. Let $(V, \Theta)$ be the associated symmetric $d$-linear space with basis $e_1, \cdots, e_n.$ Under these assumptions, we have
\begin{eqnarray*}
% \nonumber to remove numbering (before each equation)
  f_1(x_1, \cdots, x_a) &=& \Theta\left(\sum_{i=1}^a x_ie_i, \cdots, \sum_{i=1}^a x_ie_i\right), \\
  f_2(x_{a+1}, \cdots, x_n) &=& \Theta\left(\sum_{i=a+1}^n x_ie_i, \cdots, \sum_{i=a+1}^n x_ie_i\right).
\end{eqnarray*}
Note in particular that
\begin{equation}\label{o}
\Theta(e_{i_1}, e_{i_2}, \cdots, e_{i_d})=0 \ \ \mathrm{unless} \ \ 1 \le i_1, i_2, \cdots, i_d \le a \ \ \mathrm{or} \ \ a+1 \le i_1, i_2, \cdots, i_d \le n.
\end{equation}
Let $V_1$ (resp. $V_2$) be the subspace of $V$ spanned by $e_1, \cdots, e_a$ (resp. $e_{a+1}, \cdots, e_n$) and let $\epsilon_i: V \twoheadrightarrow V_i$ be the natural projections. Then for each $v \in V,$ we have $v=\epsilon_1(v)+\epsilon_2(v).$ Clearly $1=\epsilon_1 + \epsilon_2.$ It remains to prove that $\epsilon_i \in Z(V,\Theta).$ Indeed, with \eqref{o} and the $d$-linearity of $\Theta$ we have
\begin{eqnarray*}
% \nonumber to remove numbering (before each equation)
  \Theta(\epsilon_1(v_1), v_2, \cdots, v_d)
   &=& \Theta(\epsilon_1(v_1), \epsilon_1(v_2)+\epsilon_2(v_2), \cdots, \epsilon_1(v_d)+\epsilon_2(v_d)) \\
   &=& \Theta(\epsilon_1(v_1), \epsilon_1(v_2), \cdots, \epsilon_1(v_d)) \\
   &=& \Theta(\epsilon_1(v_1)+\epsilon_2(v_1), \epsilon_1(v_2), \cdots, \epsilon_1(v_d)+\epsilon_2(v_d)) \\
   &=& \Theta(v_1, \epsilon_1(v_2), \cdots, v_d).
\end{eqnarray*}
Similarly, we have $\epsilon_2 \in Z(V, \Theta).$

Conversely, suppose $\{\epsilon_1, \epsilon_2\}$ is a pair of orthogonal idempotents of the center $Z(V, \Theta)$ and $1=\epsilon_1 + \epsilon_2.$ Let $V_i=\epsilon_i(V).$ Then it is clear that $V = V_1 \oplus V_2.$ Assume that $e_1, \cdots, e_a$ (resp. $e_{a+1}, \cdots, e_n$) are a basis of $V_1$ (resp. $V_2$). As $\epsilon_1, \epsilon_2 \in Z(V, \Theta),$ it follows by the definition of centers that \eqref{o} holds. Now under the basis $e_1, \cdots, e_n$ we have
\begin{eqnarray*}
% \nonumber to remove numbering (before each equation)
  f(x_1, \cdots, x_a, x_{a+1}, \cdots, x_n) &=& \Theta\left(\sum_{i=1}^n x_ie_i, \cdots, \sum_{i=1}^n x_ie_i\right) \\
   &=& \Theta\left(\sum_{i=1}^a x_ie_i, \cdots, \sum_{i=1}^a x_ie_i\right)+ \Theta\left(\sum_{i=a+1}^n x_ie_i, \cdots, \sum_{i=a+1}^n x_ie_i\right).
\end{eqnarray*}
This gives rise to a direct sum decomposition of $f.$
\end{proof}

\section{Main Results}
We are mainly concerned about the algebraic structure of the center algebra of an arbitrary higher degree form with applications to the problems of direct sum decomposition and reconstruction from Jacobian ideal. First of all, there is no loss of generality in assuming that the forms are nondegenerate. In order to take advantage of tools of algebraic geometry and for simplicity, we assume further that the ground field $\k$ is algebraically closed. For forms over an arbitrary ground field $\k,$ one may study their direct sum decompositions on the algebraic closure $\overline{\k}$ first. Then by Harrison's uniqueness result of decompositions, detect the exact situation on the original ground field directly. From now on, let $\V \subset \k[x_1, \cdots, x_n]$ denote the affine space of forms of degree $d$ in $n$ variables. The synonymous notions of higher degree forms and symmetric multilinear spaces are used interchangeably.

First we consider forms with trivial center. Recall that a higher degree form $f$ is called central if $Z(f) \cong \k.$ A form is called absolutely indecomposable if it remains indecomposable under every field extension of the ground field $\k.$ Thanks to items (2) and (4) of Proposition \ref{pc}, a central form is absolutely indecomposable. The Cayley canonical form $x_1^4+x_2^4+tx_1^2x_2^2$ implies that a general binary quartic is central, while the Hesse canonical form $x_1^3+x_2^3+x_3^3+6\lambda x_1x_2x_3$ implies that a general ternary cubic is central, see \cite[Examples 5.1 and 5.2]{hlyz} for explanations via their centers. It is reasonable to predict that a general higher degree form is central, except the simplest case of binary cubics. In fact, the center elements $(x_{ij}) \in \k^{2 \times 2}$ of a general binary cubic $ax_1^3+bx_1^2x_2+cx_1x_2^2+dx_2^3$ satisfy the following linear equations
\[
\left\{
  \begin{array}{ll}
    bx_{11}-3ax_{12}+cx_{21}-bx_{22}=0\\
    cx_{11}-bx_{12}+3dx_{21}-cx_{22}=0
  \end{array}
\right.
\]
so it follows that the center of a nondegenerate binary cubic is a $2$-dimensional commutative subalgebra of $\k^{2 \times 2},$ hence isomorphic to either $\k \times \k$ or $\k[x]/(x^2).$ Thus, there are no central binary cubics.

In the literature, there are few explicit examples of indecomposable higher degree forms for arbitrary $n$ and $d,$ much less central ones. It is generally believed to be very difficult to construct such examples, see e.g. \cite{pu, w}. In fact, with the help of centers we can easily provide explicit examples of central forms in $\V$ for arbitrary $(n, d) \neq (2,3).$

\begin{example}[A central form] \label{ecf}
As the Cayley and Hesse canonical forms already give central binary quartics and central ternary cubics, here we assume $d>4$ if $n=2$ and $n>3$ if $d=3.$ Let $f=x_1x_2^{d-1}+\cdots+ x_{n-1}x_n^{d-1}+x_nx_1^{d-1}$. Then $\frac{\partial^2 f}{\partial x_i \partial x_j}=0$ for all $1\leq i,j \leq n$ unless $$\frac{\partial^2 f}{\partial x_{j-1} \partial x_j}=(d-1)x_j^{d-2},\quad  \frac{\partial^2 f}{ \partial x_j^2}=(d-1)(d-2)x_{j-1}x_{j}^{d-3},\quad  \frac{\partial^2 f}{\partial x_{j+1} \partial x_j}=(d-1)x_{j+1}^{d-2}.$$ Here by abuse of notation we consider $n+1=1$ for subscripts. Let $P=(p_{ij}) \in Z(f)$. Then $HP$ is symmetric, and consequently we have
\begin{equation}\label{hp}
	p_{i-1,j}x_i^{d-2}+(d-2)p_{ij}x_{i-1}x_i^{d-3}+p_{i+1,j}x_{i +1}^{d-2}=	p_{j-1,i} x_j^{d-2}+(d-2)p_{ji}x_{j-1}x_j^{d-3}+p_{j+1,i}x_{j+1}^{d-2}.
	\end{equation}
When $d\geq 4$, the terms $x_{i-1}x_i^{d-3}$ do not appear in the right hand side of the above equality unless $i=j$. Hence $p_{ij}=0$ for $i \neq j$.  When $i=j+1$, the above equality also implies that $p_{jj}=p_{j+1,j+1}$. So the matrix $P$ is a scalar matrix and $f$ is a central form.

When $d=3$, the  equality $($\ref{hp}$)$ becomes $$p_{i-1,j}x_i+p_{ij}x_{i-1}+p_{i+1,j}x_{i+1}=p_{j-1,i} x_j+p_{ji}x_{j-1}+p_{j+1,i}x_{j+1}.$$
By comparing the coefficients of the $x_i$'s, we have
\begin{itemize}
	\item if $|i-j|\geq 3$, then $p_{ij}=0$;
	\item if $i=j-2$, then $ p_{j-1,j-2}=p_{j-2,j}=0$;
	\item if $i=j-1$, then $ p_{j-1,j}=p_{j+1,j-1}=0, \ p_{jj}=p_{j-1,j-1}$.
	%\item if $i=j+1$, then $p_{j-1,j+1}=p_{j+1,j}, p_{j+1,j+1}=p_{jj}$;
    %\item if $i=j+2$, then $p_{j+2,j}=p_{j+1,j+2}, p_{j,j+2}=p_{j+1,j }=0.$
\end{itemize}
Therefore, the matrix $P$ is a scalar matrix and $f$ is a central form.
\end{example}

Furthermore, we can show that almost all higher degree forms are central, so are \emph{a priori} indecomposable.

\begin{theorem}\label{mt}
Suppose $(n, d) \neq (2,3).$ Then the set of central forms is an open and dense subset of the affine space $\V.$
\end{theorem}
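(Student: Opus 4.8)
The plan is to treat openness and density separately; density will follow formally from openness once we know the central locus is nonempty, since $\V$ is irreducible and Example~\ref{ecf} already hands us a central form.

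For openness I would argue as follows. Viewing $\V$ as an affine space coordinatised by the coefficients of $f$, each entry of the Hessian $H=H(f)$ is a $\k$-linear combination (with monomials of degree $d-2$ as coefficients) of the coefficients of $f$. Expanding the identity $(HX)^T=HX$ of \eqref{hessian} and comparing coefficients of the monomials in $x_1,\dots,x_n$ turns the condition $X\in Z(f)$ into a homogeneous linear system in the $n^2$ entries of $X$ whose coefficient matrix $M=M(f)$ has entries depending linearly — in particular polynomially — on the point $f\in\V$. Hence $Z(f)=\ker M(f)$ and $\dim_\k Z(f)=n^2-\rk M(f)$; since $\rk M$ is lower semicontinuous on $\V$ (the locus where it is $\ge r$ is where some $r\times r$ minor of $M$ does not vanish), the function $f\mapsto\dim_\k Z(f)$ is upper semicontinuous. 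Because $\k I\subseteq Z(f)$ for every $f$, one always has $\dim_\k Z(f)\ge 1$, with equality precisely when $Z(f)=\k I$, i.e. precisely when $f$ is central. Thus the set of central forms equals $\{f\in\V:\dim_\k Z(f)\le 1\}$, which is open.

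For density I would note that it now suffices to exhibit a single central form, because an affine space is an irreducible variety and a nonempty open subset of an irreducible variety is dense. Such a form is available for every admissible pair: Example~\ref{ecf} supplies one whenever $n\ge 3$ with $(n,d)\neq(3,3)$, or $n=2$ with $d\ge 5$, while the two leftover cases $(n,d)=(3,3)$ and $(n,d)=(2,4)$ are covered by generic members of the Hesse pencil $x_1^3+x_2^3+x_3^3+6\lambda x_1x_2x_3$ and the Cayley pencil $x_1^4+x_2^4+t\,x_1^2x_2^2$, which (as recalled before the statement) have trivial center. Hence, whenever $(n,d)\neq(2,3)$, the open locus of central forms is nonempty, and therefore dense.

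I do not expect a serious obstacle: the genuine content — producing a central form for each $(n,d)\neq(2,3)$ — is exactly Example~\ref{ecf} together with the classical binary-quartic and ternary-cubic normal forms, so the theorem itself is almost a formality on top of it. The only points needing care are (i) verifying that, inside $\k^{n\times n}$, the center is cut out as the kernel of a matrix whose entries vary algebraically with $f$, so that semicontinuity of rank applies, and (ii) recording that $\dim_\k Z(f)=1$ is the \emph{minimum} possible value of $\dim_\k Z(f)$, so that the central forms constitute the generic stratum rather than a smaller one. It is also worth remarking, to justify working over all of $\V$ rather than its nondegenerate locus, that a degenerate form is never central when $n\ge 2$: if a variable can be removed by a change of coordinates, the associated rank-one idempotent lies in $Z(f)\setminus\k I$.
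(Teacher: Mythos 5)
Your proposal is correct and follows essentially the same route as the paper: both realize $Z(f)$ as the kernel of a linear system whose coefficient matrix depends polynomially on $f$, identify the central locus as the (open) locus where that matrix attains the maximal possible rank $n^2-1$, and then invoke Example~\ref{ecf} (together with the Cayley and Hesse pencils for the leftover cases) plus irreducibility of $\V$ to get density. The only cosmetic difference is that you set up the linear system from the Hessian condition \eqref{hessian} while the paper uses the tensor slices $A^{(i_3\cdots i_d)}$ as in \eqref{matrix}; these are equivalent formulations of the same argument.
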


\begin{proof}
Suppose $f \in \V.$ We start with computing $Z(f)$ via Equation (\ref{ec}). It is well known that the matrix equations therein can be transformed to standard linear equations in the following way. Given an $n \times n$ matrix $X,$ let $X_{\v}$ be the $n^2$-dimensional column vector
\[
\left(
\begin{array}{c}
X_1 \\
\vdots \\
X_n
\end{array}
\right)
\]
where $X_i$ is the $i$-th column of $X.$ Let $X^T_{\v}$ denote the corresponding vector of $X^T$ and let $P$ be the permutation matrix such that $X^T_{\v} = P X_{\v}.$ Then the matrix equation $X^TA^{(i_3 \cdots i_d)}=A^{(i_3 \cdots i_d)}X$ is equivalent to the following linear equations
\begin{equation*}
 [I_n \otimes A^{(i_3 \cdots i_d)} -(A^{(i_3 \cdots i_d)} \otimes I_n)P]X_{\v} =0.
\end{equation*}
Let $B$ denote the $n^d \times n^2$ matrix of coefficients of the previous linear equations, that is
\begin{equation}\label{matrix}
B=
\left(
\begin{array}{ccc}
  I_n \otimes A^{(1 \cdots 11)} -(A^{(1 \cdots 11)} \otimes I_n)P \\
  I_n \otimes A^{(1 \cdots 1 2)} -(A^{(1 \cdots 12)} \otimes I_n)P \\
 \vdots\\
 I_n \otimes A^{(n \cdots n)} -(A^{(n \cdots n)} \otimes I_n)P
\end{array}
\right).
\end{equation}
Then clearly $Z(f)$ is obtained by putting the solution space of the linear equations
\begin{equation} \label{lec}
BX_{\v}=0
\end{equation}
back to the form of $n \times n$ matrices.

According to \eqref{lec}, a form $f$ is central if and only the associated matrix $B$ has rank $n^2-1.$ Therefore, the set of central forms in $\V$ is a union of all the principal open sets defined by the $n^2-1$ minors of $B.$ Moreover, this set is non-empty thanks to Example \ref{ecf}. It is well known that any non-empty Zariski open subset of $\V$ is dense. We are done.
\end{proof}

%Now it follows easily that
%
%\begin{corollary}
%The set of central forms is dense in $\V.$
%\end{corollary}
%In other words, almost all forms in $\V$ are central, a priori indecomposable.

%\begin{remark}%Comment Pumpl\"un's work \cite{pu}.
%As a direct consequence of the previous theorem, almost all higher degree forms are central, a priori indecomposable.
%\end{remark}

Next we consider the semisimplicity of the center algebras of higher degree forms. This turns out to be related to geometric properties of forms. Recall that a form $f(x_1, \cdots, x_n)$ is called smooth, or nonsingular, if the simultaneous equations \[\frac{\partial f }{ \partial x_1}= \cdots = \frac{\partial f }{ \partial x_n} =0\] have no nonzero solutions. There is also a notion of regularity of higher degree forms (see \cite{h2}) which generalizes and unifies the nondegeneracy and smoothness. That is, in terms of symmetric multilinear spaces, $(V,\Theta)$ is called $l$-regular if $\Theta(u, \cdots, u, v_{l+1}, \cdots, v_d)=0$ for all $v_{l+1}, \cdots, v_d \in V$ implies $u=0.$ Suppose $f$ is associated to $(V,\Theta)$ under the basis $e_1, \cdots, e_n.$ If $u=a_1 e_1+a_2e_2+\cdots+a_ne_n,$ then it can be checked (see \cite{h1}) that
\begin{equation} \label{par}
\Theta\left(u, \sum_{1 \le i \le n}x_ie_i, \cdots, \sum_{1 \le i \le n}x_ie_i\right)=\frac{1}{d} \sum_{1 \le i \le n} a_i \frac{\partial f}{\partial x_i}.
\end{equation}
With this it is clear that for corresponding symmetric $d$-linear spaces and higher degree forms $1$-regular = nondegenerate, $(d-1)$-regular = smooth and $l$-regular implies $(l-1)$-regular. In addition, if $f$ is non-$l$-regular, then $f$ has a singularity of multiplicity $d-l+1.$

It was noticed in \cite{h1, h2} that the condition of $2$-regularity for forms imposes very strong restriction, namely the semisimpleness, on their centers. For completeness, we include a proof here.

\begin{lemma}\label{lss}
Suppose $f$ is 2-regular. Then $Z(f)$ is semisimple.
\end{lemma}

\begin{proof}
It is enough to prove that if $\phi \in Z(f)$ is nilpotent, then $\phi=0.$ Otherwise, suppose there was a nonzero nilpotent element $\phi \in Z(f)$ with $\phi^{m+1}=0$ while $\phi^m \ne 0$ for some $m \ge 1.$ Then there must be some $v \in V$ such that $ \phi^m(v) \ne 0.$ Hence $$\Theta(\phi^m(v), \phi^m(v), v_3, \cdots, v_d)=\Theta(\phi^{2m}(v), v, v_3, \cdots, v_d)=\Theta(0, v, v_3, \cdots, v_d)=0$$ for all $v_3, \cdots, v_d \in V$ as $2m \ge m +1.$ Now the $2$-regularity of $f$ forces $\phi^m(v)=0.$ This leads to a desired contradiction.
\end{proof}

\begin{remark}
If a form is non-$2$-regular, then it has a singularity of multiplicity $d-1.$ There are non-$2$-regular higher degree forms whose center algebras are semisimple. For example, the determinant of a generic $n \times n$ matrix with $n \ge 3$ is non-$2$-regular as any rank $1$ matrix is a common zero of its all degree $n-2$ differentials. However the generic determinant has trivial center, namely $\k,$ see Example \ref{det} for more details.
\end{remark}

For a complete description of the semisimplicity of center algebras, we need the notion of limit of direct sums forms introduced in \cite{bbkt}.

\begin{definition}
A higher degree form $f$ is said to be a limit of direct sums (LDS) form if after a reversible linear change of variables, \begin{equation} \label{lds}
f(x_1, \cdots, x_n)=\sum_{i=1}^{l} x_i \frac{\partial h(x_{l+1},\cdots, x_{2l})}{\partial x_{l+i}}+ g(x_{l+1},\cdots, x_n),
\end{equation}
where $h$ and $g$ are forms of the same degree as $f,$ in $l$ and $n-l$ variables respectively.
\end{definition}

The terminology is justified by the following equation
\begin{eqnarray*}
% \nonumber to remove numbering (before each equation)
  f(x_1, \cdots, x_n) &=& \lim_{t\rightarrow 0} \frac{1}{t}[h(tx_1+x_{l+1}, \cdots, tx_l+x_{2l})-h(x_{l+1},\cdots,x_{2l}) \\
   & & +tg(tx_1+x_{l+1}, \cdots, tx_l+x_{2l},x_{2l+1}, \cdots, x_n)]
\end{eqnarray*}
as the latter is a direct sum when $t \ne 0.$

\begin{theorem}\label{cns}
Suppose $f$ is a nondegenerate higher degree form. Then the center $Z(f)$ is nonsemisimple if and only if $f$ is an LDS form.
\end{theorem}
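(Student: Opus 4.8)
The plan is to carry out everything inside the associated symmetric $d$-linear space $(V,\Theta)$, using only two basic facts: that $\phi\in Z(f)=Z(V,\Theta)$ is equivalent to $\Theta(\phi v_1,v_2,\dots,v_d)=\Theta(v_1,\phi v_2,\dots,v_d)$ for all $v_i$, and that $\Theta$ is symmetric. Sliding $\phi$ from one slot to another, possibly producing $\phi^2$, is the only real tool needed in both directions.

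For the implication that an LDS form has nonsemisimple center, assume $f=\sum_{i=1}^{l}x_i\,\partial h/\partial x_{l+i}+g(x_{l+1},\dots,x_n)$ as in \eqref{lds}, with $e_1,\dots,e_n$ the basis of $V$ dual to the variables. I would exhibit an explicit nonzero nilpotent in the center: define $\phi\in\operatorname{End}(V)$ by $\phi(e_{l+i})=e_i$ for $1\le i\le l$ and $\phi(e_j)=0$ for $j\notin\{l+1,\dots,2l\}$. Then $\phi\neq 0$ and $\phi^2=0$, so it remains to verify $\phi\in Z(f)$, which is a finite check on basis vectors. The point is that the shape of \eqref{lds} forces three things: a variable $x_i$ with $i\le l$ occurs only to the first power, never together with another such variable, and only alongside variables among $x_{l+1},\dots,x_{2l}$; and the symmetric-tensor entry of $f$ indexed by $i,m_2,\dots,m_d$ with $i\le l$ and $m_2,\dots,m_d\in\{l+1,\dots,2l\}$ equals the symmetric-tensor entry of $h$ indexed by $l+i,m_2,\dots,m_d$, which is symmetric in $l+i,m_2,\dots,m_d$. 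These observations are exactly what is needed for the slot-identities to hold on basis vectors, so $\phi\in Z(f)$ and $Z(f)$ is nonsemisimple.

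For the converse, suppose $Z(f)$ is nonsemisimple. Since $Z(f)$ is a finite-dimensional commutative algebra by Proposition \ref{pc}(1), its nilradical is nonzero, and replacing a nonzero nilpotent element by a suitable power of it we may assume there is $\phi\in Z(f)$ with $\phi\neq 0$ and $\phi^2=0$. Put $l=\operatorname{Rank}\phi\ge 1$; from $\phi^2=0$ we get $\operatorname{Im}\phi\subseteq\ker\phi$, so rank--nullity gives $2l\le n$. I would then choose a basis adapted to $\phi$: take $e_{l+1},\dots,e_{2l}$ spanning a complement of $\ker\phi$, set $e_i:=\phi(e_{l+i})$ for $1\le i\le l$, and extend the basis $e_1,\dots,e_l$ of $\operatorname{Im}\phi$ to a basis $e_1,\dots,e_l,e_{2l+1},\dots,e_n$ of $\ker\phi$, so that $\phi(e_{l+i})=e_i$ while $\phi$ annihilates $e_1,\dots,e_l$ and $e_{2l+1},\dots,e_n$. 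Now I would compute the symmetric-tensor entries $\Theta(e_{j_1},\dots,e_{j_d})$ of $f$ by substituting $e_{j_s}=\phi(e_{l+j_s})$ whenever $j_s\le l$ and moving $\phi$ to another slot: if a second index is also $\le l$ the entry picks up $\phi^2$ and vanishes; if a second index is $\ge 2l+1$ then $\phi$ kills it and the entry vanishes; and the surviving ``mixed'' entries $\Theta(e_i,e_{l+i_2},\dots,e_{l+i_d})$ with $i\le l$ are, by the same move together with the symmetry of $\Theta$, invariant under all permutations of $i,i_2,\dots,i_d$, hence are the symmetric-tensor entries of a well-defined degree-$d$ form $h$ in $x_{l+1},\dots,x_{2l}$; finally the entries with all indices in $\{l+1,\dots,n\}$ are the symmetric-tensor entries of a form $g$ in $x_{l+1},\dots,x_n$. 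Reassembling $f$ from its monomials then yields precisely $f=\sum_{i=1}^{l}x_i\,\partial h/\partial x_{l+i}+g$, so $f$ is an LDS form.

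I expect the reassembly step in the converse to be the main obstacle: checking that the surviving mixed entries really do define a symmetric form $h$ and that $\sum_{i=1}^l x_i\,\partial h/\partial x_{l+i}$ reproduces exactly the corresponding monomials of $f$ requires tracking the multinomial factors relating symmetric-tensor entries to monomial coefficients, and it is here that centrality of $\phi$ is used essentially — it is precisely the slot-sliding identity that supplies the permutation symmetry making $h$ well-defined. By contrast the forward implication is essentially mechanical once the explicit $\phi$ has been written down.
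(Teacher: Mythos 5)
Your proposal is correct and follows essentially the same route as the paper: the same explicit nilpotent $\phi$ (the paper writes it as the block matrix $N$ acting on the Hessian) for the forward direction, and for the converse the same reduction to $\phi^2=0$, the same adapted basis with $\phi(e_{l+i})=e_i$, and the same slot-sliding argument giving the vanishing and symmetry of the surviving tensor entries. The only cosmetic difference is that you define $h$ directly by its symmetric tensor, whereas the paper verifies $\partial h_i/\partial x_{l+j}=\partial h_j/\partial x_{l+i}$ and integrates via Euler's identity; both handle the reassembly step you flagged.
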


\begin{proof}
Suppose $f(x_1, \cdots, x_n)=\sum_{i=1}^{l} x_i \frac{\partial h(x_{l+1},\cdots, x_{2l})}{\partial x_{l+i}}+ g(x_{l+1},\cdots, x_n)$ is an LDS form. Then the Hessian matrix $H$ of $f$ is
$\left(
  \begin{array}{cc}
    0 & \begin{array}{cc}
          H_h & 0
        \end{array}
     \\
   \begin{array}{c}
     H_h \\
     0
   \end{array}
     & H_g \\
  \end{array}
\right)$ where $H_g$ and $H_h$ are the Hessian matrices of $g$ and $h$ respectively.
Let $N$ denote the block matrix
$\left(
\begin{array}{ccc}
0  & \mathrm{I}_l  & 0  \\
 0 &  0 & 0  \\
 0 &  0 &  0
\end{array}
\right)$ which is clearly nilpotent. It is easy to verify that $(HN)^T=HN.$ This implies that $Z(f)$ is nonsemisimple.

Conversely, suppose $Z(f)$ is nonsemisimple. Then take a nontrivial nilpotent element $\phi \in Z(f)$ satisfying $\phi^2=0.$ This is possible as $Z(f)$ is a commutative algebra. Let $(V, \Theta)$ be the associated symmetric $d$-linear space of $f.$ Assume $\rk \phi =l.$ Let $e_1, \cdots, e_l$ be a basis of $\Im \phi$ and choose $e_{l+i} \in V$ such that $\phi(e_{l+i})=e_i$ for all $1 \le i \le l.$ Then $e_1, \cdots, e_{2l}$ are linearly independent and we extend them to a basis $e_1, \cdots, e_n$ of $V.$ Note that $\phi(e_j)=0$ whenever $j \le l$ or $j \ge 2l+1.$ It follows that $\Theta(e_{i_1}, \cdots, e_{i_d})=0$ whenever there are two indices $i_s, i_t \in [1,l]$ or $i_s \in [1,l]$ and $i_t \in [2l+1, n].$ Then, under this basis, the form $f$ becomes
\begin{eqnarray*}
f(x_1, \cdots, x_n)&=&\Theta\left(\sum_{i=1}^n x_ie_i, \cdots, \sum_{i=1}^n x_ie_i\right) \\
&=&\sum_{i=1}^l x_i \sum_{1 \le i_2, \cdots, i_d \le l} \Theta(e_i, e_{l+i_2}, \cdots, e_{l+i_d}) x_{l+i_2} \cdots x_{l+i_d} \\
&& +\sum_{l+1 \le j_1, \cdots, j_d \le n} \Theta(e_{j_1}, \cdots, e_{j_d}) x_{j_1} \cdots x_{j_d}.
\end{eqnarray*}
Let $g(x_{l+1}, \cdots, x_n)=\sum_{l+1 \le j_1, \cdots, j_d \le n} \Theta(e_{j_1}, \cdots, e_{j_d}) x_{j_1} \cdots x_{j_d}$ and $$h_i(x_{l+1}, \cdots, x_{2l})=\sum_{1 \le i_2, \cdots, i_d \le l} \Theta(e_i, e_{l+i_2}, \cdots, e_{l+i_d}) x_{l+i_2} \cdots x_{l+i_d}$$ for all $1 \le i \le l.$ Note that $h_i=\frac{\partial f}{\partial x_i}$ and thus by \eqref{par} one has
\begin{eqnarray*}
\frac{\partial h_i}{\partial x_{l+j}}=\frac{\partial^2 f}{\partial x_i \partial x_{l+j}}&=&d(d-1)\Theta\left(e_i,e_{l+j},\sum_{k=l+1}^{2l} x_ke_k, \cdots, \sum_{k=l+1}^{2l} x_ke_k\right) \\
&=&d(d-1)\Theta\left(\phi(e_{l+i}),e_{l+j},\sum_{k=l+1}^{2l} x_ke_k, \cdots, \sum_{k=l+1}^{2l} x_ke_k\right) \\
&=&d(d-1)\Theta\left(e_{l+i},\phi(e_{l+j}),\sum_{k=l+1}^{2l} x_ke_k, \cdots, \sum_{k=l+1}^{2l} x_ke_k\right) \\
&=&d(d-1)\Theta\left(e_{l+i},e_j,\sum_{k=l+1}^{2l} x_ke_k, \cdots, \sum_{k=l+1}^{2l} x_ke_k\right) \\
&=&d(d-1)\Theta\left(e_j,e_{l+i},\sum_{k=l+1}^{2l} x_ke_k, \cdots, \sum_{k=l+1}^{2l} x_ke_k\right) \\
&=&\frac{\partial^2 f}{\partial x_j \partial x_{l+i}}=\frac{\partial h_j}{\partial x_{l+i}}.
\end{eqnarray*}
Then by the well known Euler's identity, the following degree $d$ form $$h(x_{l+1}, \cdots, x_{2l})=\frac{1}{d}\sum_{1 \le i \le l} x_{l+i}h_i(x_{l+1}, \cdots, x_{2l})$$ satisfies $\frac{\partial h}{\partial x_{l+i}}=h_i$ for all $1 \le i \le l.$ Now we have shown that \[f(x_1, \cdots, x_n)=\sum_{i=1}^{l} x_i \frac{\partial h(x_{l+1},\cdots, x_{2l})}{\partial x_{l+i}}+ g(x_{l+1},\cdots, x_n).\] That is to say, $f$ is an LDS form.
\end{proof}

Now we consider the direct sum decompositions of higher degree forms with semisimple centers.

\begin{theorem} \label{criterion}
Suppose $Z(f)$ is semisimple, or equivalently $f$ is not an LDS form. Then the number of indecomposable direct summands of $f$ is exactly $\dim Z(f).$
\end{theorem}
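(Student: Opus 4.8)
The plan is to read the number of indecomposable summands of $f$ directly off the idempotents of $Z(f)$, using the dictionary of Proposition \ref{pc}(2) together with the classification of finite-dimensional commutative semisimple algebras over $\k=\overline{\k}$. By Proposition \ref{pc}(1), $Z(f)$ is a finite-dimensional commutative $\k$-algebra, and it is semisimple by hypothesis; since $\k$ is algebraically closed, the Wedderburn--Artin theorem gives $Z(f)\cong\k\times\cdots\times\k$ with $m:=\dim Z(f)$ factors. In particular $Z(f)$ has exactly $m$ primitive idempotents $\epsilon_1,\dots,\epsilon_m$; they form a complete set of orthogonal idempotents ($\epsilon_s\epsilon_t=0$ for $s\ne t$ and $\sum_{s=1}^m\epsilon_s=1$), and each ideal $\epsilon_sZ(f)$ is one-dimensional.

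By Proposition \ref{pc}(2), the complete orthogonal system $\{\epsilon_1,\dots,\epsilon_m\}$ corresponds to a direct sum decomposition $f=f_1+\cdots+f_m$, where $f_i$ is the form associated to the orthogonal summand $V_i:=\epsilon_i(V)$ of $(V,\Theta)$. The key step is to show that each $f_i$ is central, hence indecomposable. First, $f_i$ is nondegenerate: a variable of $f_i$ removable by an invertible change of variables would be removable from $f$ as well, contradicting the standing nondegeneracy assumption. Next, ``block inclusion'' defines an injective $\k$-algebra map $Z(f_i)\hookrightarrow Z(f)$: given $\psi\in Z(f_i)=Z(V_i,\Theta|_{V_i})$, extend it by $0$ on $\bigoplus_{j\ne i}V_j$ to an endomorphism $\psi\oplus 0$ of $V$; using the $d$-linearity of $\Theta$ together with the orthogonality relations $\Theta(e_{i_1},\dots,e_{i_d})=0$ unless all indices lie in one block (valid exactly as in the proof of Proposition \ref{pc}(2)), one checks immediately that $\psi\oplus 0$ satisfies the defining identity of $Z(f)$. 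Since $\psi\oplus 0$ factors through the projection onto $V_i$, its image lies in $\epsilon_iZ(f)$, which is one-dimensional; as the map is injective and $\k\cdot\mathrm{id}_{V_i}\subseteq Z(f_i)$, we obtain $Z(f_i)\cong\k$. Thus $f_i$ is central, and by items (2) and (4) of Proposition \ref{pc} it is (absolutely) indecomposable.

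This produces a decomposition of $f$ into $m=\dim Z(f)$ indecomposable forms, and Proposition \ref{pc}(3) then finishes the proof: every decomposition of $f$ into indecomposables has exactly $m$ terms. The only step carrying real content is the centrality of the blocks $f_i$, and within it the one point needing care is that the block inclusion genuinely lands in $Z(f)$ and, in fact, inside the one-dimensional ideal $\epsilon_iZ(f)$; the rest is the structure theory of commutative semisimple algebras plus bookkeeping. A variant I would keep in reserve is to first establish $Z(f_1+\cdots+f_m)=Z(f_1)\times\cdots\times Z(f_m)$ for nondegenerate forms — the off-diagonal block of any center element pairs trivially against a nondegenerate summand and hence vanishes — which gives $\dim Z(f)=\sum_i\dim Z(f_i)$, and then each indecomposable $f_i$ satisfies $Z(f_i)\cong\k$ because a connected commutative semisimple algebra over an algebraically closed field equals the field itself.
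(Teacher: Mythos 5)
Your proof is correct and follows essentially the same route as the paper's: the Wedderburn--Artin theorem gives $Z(f)\cong\k\times\cdots\times\k$ with $\dim Z(f)$ factors, and the idempotent--decomposition dictionary of Proposition \ref{pc} does the rest. You additionally verify explicitly that each block $f_i$ is central (hence indecomposable) via the block-inclusion into the one-dimensional ideal $\epsilon_iZ(f)$ --- a point the paper leaves implicit in the bijection of Proposition \ref{pc}(2) --- and that verification is sound.
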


\begin{proof}
Under the assumption, the center algebra $Z(f)$ is isomorphic to $\k \times \cdots \times \k$ thanks to the well known Wedderburn-Artin Theorem. Then the assertion is a direct consequence of Proposition \ref{pc}.
\end{proof}

\begin{remarks}
Keep the assumption that $f$ is a nondegenerate higher degree form.
\begin{enumerate}
  \item If $f$ has semisimple center, then $\dim Z(f) \le n$ as the number of direct summands is not greater than the number of variables. In the specific case of $\dim Z(f)=n,$ the form $f$ is equivalent to the sum of $d$-th power of $n$ independent linear forms. These are the so-called diagonalizable forms and have been investigated in our previous paper \cite{hlyz} via the theory of Harrison's centers.
  \item It was observed in \cite{f1} that if $Z(f)$ is nonsemisimple, then $f$ is a nullform, or GIT unstable. The converse is not true. For example, the binary form $f=x^2y^3+xy^4+y^5$ is a nullform (see e.g. \cite{mukai}), however $Z(f) \cong \k.$ In other words, if the center algebra of a form has nontrivial nilpotent elements, then it is extremely singular. It would be interesting to unravel the connection between the nilpotency of $Z(f)$ and the singularity of $f.$
  \item In the literature, it is considered very difficult to understand the nature of central higher degree forms, see for example \cite{pu}. Now this seems reasonable, as a generic higher degree form is central due to our Theorem \ref{mt}. On the other hand, by Theorems \ref{cns} and \ref{criterion} we may interpret a central form as an indecomposable non LDS form.
\end{enumerate}
\end{remarks}

The previous theorem gives an elementary criterion for direct sum decomposability of higher degree forms with semisimple center.

\begin{corollary} \label{ind}
Suppose $Z(f)$ is semisimple. Then $f$ is indecomposable, i.e. not a direct sum, if and only if $\dim Z(f)=1,$ if and only if $\rk B=n^2-1$ where $B$ is as in \eqref{matrix}.
\end{corollary}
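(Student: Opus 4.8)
The plan is to chain together the results already in place. The corollary has three conditions that must be shown equivalent under the standing hypothesis that $Z(f)$ is semisimple (equivalently, by Theorem \ref{cns}, that $f$ is not an LDS form), so it suffices to verify two implications or equivalences. I would organize the argument around the dimension of the center algebra.

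First I would observe that by Proposition \ref{pc}(2), the direct sum decompositions of $f$ correspond to complete sets of orthogonal idempotents in $Z(f)$. In particular, $f$ fails to be indecomposable precisely when $Z(f)$ contains a nontrivial idempotent, i.e. when the only idempotents are $0$ and $1$ exactly when $f$ is indecomposable. Now invoke Theorem \ref{criterion}: under the semisimplicity hypothesis, the number of indecomposable direct summands equals $\dim Z(f)$, so $f$ is indecomposable if and only if that number is $1$, i.e. $\dim Z(f) = 1$. This is the first equivalence. Alternatively, and more directly, semisimplicity plus commutativity forces $Z(f) \cong \k \times \cdots \times \k$ by Wedderburn--Artin, and such an algebra has a nontrivial idempotent if and only if it has more than one factor, i.e. if and only if $\dim Z(f) > 1$; combined with Proposition \ref{pc}(2) this again yields that $f$ is indecomposable iff $\dim Z(f) = 1$.

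For the second equivalence, I would recall the linear-algebraic description of $Z(f)$ established in the proof of Theorem \ref{mt}: the center is recovered by solving the homogeneous linear system $B X_{\v} = 0$ with $B$ the $n^d \times n^2$ coefficient matrix of \eqref{matrix}, so $\dim Z(f) = n^2 - \rk B$. Since $I_n \in Z(f)$ always, $\dim Z(f) \ge 1$, hence $\rk B \le n^2 - 1$ for every $f$. Therefore $\dim Z(f) = 1$ if and only if $\rk B = n^2 - 1$, which is exactly the third condition. Stringing the two equivalences together gives the corollary.

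I do not expect any real obstacle here: the statement is essentially a repackaging of Theorem \ref{criterion} (or of Proposition \ref{pc}(2) together with the structure of commutative semisimple algebras) and of the matrix reformulation of the center from the proof of Theorem \ref{mt}. The only point requiring a word of care is making explicit that $I_n$ always lies in the center so that the inequality $\rk B \le n^2 - 1$ is automatic and the equality case is the correct translation of $\dim Z(f) = 1$; this is immediate from \eqref{ec}. Thus the proof will be a short paragraph citing Theorems \ref{criterion} and \ref{mt} and Proposition \ref{pc}.
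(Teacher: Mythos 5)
Your proposal is correct and follows exactly the route the paper intends: the paper states this as an immediate corollary of Theorem \ref{criterion} (number of indecomposable summands equals $\dim Z(f)$ in the semisimple case) together with the identification $\dim Z(f) = n^2 - \rk B$ coming from the linear system \eqref{lec} in the proof of Theorem \ref{mt}. Your extra remark that $I_n \in Z(f)$ forces $\rk B \le n^2-1$ is a harmless and accurate clarification of why the equality case is the right translation.
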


Note that smooth forms have semisimple centers by Lemma \ref{lss}, so our criterion contains the cases treated in \cite{bbkt,f1,w}. In comparison, our criterion is extremely simple as it is equivalent to computing the rank of a finite set of vectors. In addition, there is also a very simple algorithm for direct sum decompositions of higher degree forms with semisimple centers. Given a nondegenerate form $f,$ the first step is to solve the linear equations \eqref{lec} and take a basis of the solution space. The second step is to diagonalize simultaneously the chosen basis. Note that $Z(f)$ is semisimple if and only if each $X \in Z(f)$ is diagonalizable by the well known Jordan decomposition theorem. This can be detected via the minimal polynomial of any basis element, using Euclid's algorithm to see whether the minimal polynomial has multiple roots. This seems computationally cheaper than the Jacobian criterion of detecting the smoothness of $f.$ The third step is to find out diagonal idempotent matrices from linear combinations of the obtained set of diagonal matrices. Finally one determines a set of primitive orthogonal idempotents and decompose the form $f$ accordingly.

The algebraic structure of center algebras can easily provide a complete answer to the classical question of whether a higher degree form is reconstructible from its Jacobian ideal. Suppose $f \in \V.$ Let $J(f)$ be the Jacobian ideal generated by $\frac{\partial f}{\partial x_1},\cdots,\frac{\partial f}{\partial x_n}$ and let $E(f)$ be the vector space spanned by $\frac{\partial f}{\partial x_1},\cdots,\frac{\partial f}{\partial x_n}$. Recall that $f$ can be reconstructed from $J(f),$ or equivalently from $E(f),$ is equivalent to saying that if there is a $g \in \V$ such that $J(g)=J(f),$ or equivalently $E(g)=E(f),$ then $g = \lambda f$ for some $\lambda \in \k^*.$ The crux is the following observation appeared first in \cite{cg} to our knowledge, see also \cite{k}. For reader's convenience, we present it here with full detail.

\begin{lemma}\label{cj}
Suppose $f \in \V$ and $A\in\k^{n\times n}$. Then there exists a $g \in \V$ such that $$\left(\frac{\partial g}{\partial x_1},\cdots,\frac{\partial g}{\partial x_n}\right)=\left(\frac{\partial f}{\partial x_1},\cdots,\frac{\partial f}{\partial x_n}\right)A$$ if and only if $A\in Z(f)$.
\end{lemma}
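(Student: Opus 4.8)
The plan is to translate the differential identity into a statement about the Hessian matrix, or equivalently about the symmetric tensor, and then match it to the defining equations \eqref{hessian}, \eqref{ec} of the center. The starting point is the observation that the identity $\left(\frac{\partial g}{\partial x_1},\cdots,\frac{\partial g}{\partial x_n}\right)=\left(\frac{\partial f}{\partial x_1},\cdots,\frac{\partial f}{\partial x_n}\right)A$ means $\frac{\partial g}{\partial x_i}=\sum_j a_{ji}\frac{\partial f}{\partial x_j}$ for all $i$. Differentiating once more, the left-hand side gives the Hessian of $g$, while the right-hand side gives $H_g = A^T H_f$ where $H_f$ is the Hessian of $f$. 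Since a Hessian matrix is always symmetric, this forces $A^T H_f$ to be symmetric, i.e. $(H_f A)^T = H_f A$, which is precisely the condition $A \in Z(f)$ in \eqref{hessian}. That handles the ``only if'' direction essentially for free.

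For the ``if'' direction I would run the argument in reverse, the only subtlety being that one must actually produce a form $g$ whose gradient is the prescribed tuple of linear combinations of the $\frac{\partial f}{\partial x_j}$. This is an integrability question: given the $n$ homogeneous degree $d-1$ polynomials $\psi_i := \sum_j a_{ji}\frac{\partial f}{\partial x_j}$, when is $(\psi_1,\dots,\psi_n)$ the gradient of a single form? The answer is the classical one: it suffices that $\frac{\partial \psi_i}{\partial x_k} = \frac{\partial \psi_k}{\partial x_i}$ for all $i,k$, and this symmetry is exactly $A \in Z(f)$ again (it says $A^T H_f$ is symmetric). Given that, the Euler-identity trick used already in the proof of Theorem \ref{cns} applies verbatim: set $g = \frac{1}{d}\sum_i x_i \psi_i$; then $\frac{\partial g}{\partial x_k} = \frac{1}{d}\psi_k + \frac{1}{d}\sum_i x_i \frac{\partial \psi_i}{\partial x_k} = \frac{1}{d}\psi_k + \frac{1}{d}\sum_i x_i \frac{\partial \psi_k}{\partial x_i} = \frac{1}{d}\psi_k + \frac{d-1}{d}\psi_k = \psi_k$, using that $\psi_k$ is homogeneous of degree $d-1$. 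Since we are in characteristic $0$ or $>d$, the divisions by $d$ are legitimate, so $g \in \V$ is the required form.

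A cleaner bookkeeping of the same computation goes through the symmetric tensor / $d$-linear space picture rather than the Hessian, which avoids worrying about the factors of $d(d-1)$ and makes the relation to definition \eqref{ec} transparent. Writing $f = A x^d$ in the notation of Section 2, one has $\frac{\partial f}{\partial x_i} = d\,(A_i)x^{d-1}$ where $A_i = A^{(i)}$ is the relevant slice; the condition $\frac{\partial g}{\partial x_i} = \sum_j a_{ji}\frac{\partial f}{\partial x_j}$ for a form $g = B x^d$ then reads, after peeling off the outer $d$ and comparing coefficients of all degree $d-1$ monomials, as a relation between $B$ and $A^T$-twisted slices of $A$, which upon full symmetrization of $B$ is solvable precisely when the $A^{(i_3\cdots i_d)}$ satisfy $X^T A^{(i_3\cdots i_d)} = A^{(i_3\cdots i_d)}X$ with $X = A$, i.e. $A \in Z(A) = Z(f)$. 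I would present whichever of these two routes is shortest; the Hessian route is probably the most readable.

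The one genuine point to be careful about — the ``main obstacle'', such as it is — is the existence of $g$ in the ``if'' direction: one cannot simply invoke that a closed $1$-form is exact, because we want $g$ to be a polynomial (indeed a form of degree exactly $d$), not merely a local primitive. The Euler-identity construction $g = \frac{1}{d}\sum_i x_i \psi_i$ resolves this cleanly and is the step I would write out in full, since it is the only place the characteristic hypothesis on $\k$ is used and the only place where homogeneity is essential. Everything else is the routine ``differentiate the gradient identity and observe both sides must be symmetric matrices'' bookkeeping.
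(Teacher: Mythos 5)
Your proposal is correct and follows essentially the same route as the paper: reduce the existence of $g$ to the integrability condition $\frac{\partial \psi_i}{\partial x_k}=\frac{\partial \psi_k}{\partial x_i}$ and observe that this is exactly the symmetry of $HA$, i.e.\ $A\in Z(f)$ as in \eqref{hessian}. The only difference is that you make the lifting explicit via the Euler-identity formula $g=\frac{1}{d}\sum_i x_i\psi_i$ (which is where the hypothesis on $\operatorname{char}\k$ enters), whereas the paper simply cites the integrability criterion as well known; this is a welcome extra detail, not a divergence.
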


\begin{proof}
It is well known that a set  of polynomials $\{g_1,\cdots, g_n\}$ can be lifted to a polynomial $g$ such that $\frac{\partial g}{\partial x_i}=g_i$ for all $i$ if and only if  $\frac{\partial g_i}{\partial x_j}=\frac{\partial g_j}{\partial x_i}$ for all $i, j.$ Let $g_i$ be the $i$-th coordinate of the vector $(\frac{\partial f}{\partial x_1},\cdots,\frac{\partial f}{\partial x_n})A$. Then it is easy to verify that the $g_i$'s can be lifted if and only if $HA$ is symmetric, that is, $A\in Z(f)$.
\end{proof}

\begin{theorem}\label{jacobian}
Suppose $f \in \V$ is a nondegenerate higher degree form. Then the following are equivalent.
\begin{enumerate}
  \item $f$ can be reconstructed from $J(f).$
  \item $f$ is a central form, i.e. $Z(f) \cong \k.$
  \item $\rk B=n^2-1$ where $B$ is as in \eqref{matrix}.
  \item $f$ is an indecomposable non LDS form.
\end{enumerate}
\end{theorem}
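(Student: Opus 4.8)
The plan is to prove the four statements equivalent using condition (2), $Z(f)\cong\k$, as a hub: I will establish (2)$\Leftrightarrow$(3), (2)$\Leftrightarrow$(4), and (1)$\Leftrightarrow$(2), the first two being essentially bookkeeping with results already in hand. For (2)$\Leftrightarrow$(3), recall from the proof of Theorem~\ref{mt} that under the vectorization $X\mapsto X_{\v}$ the center $Z(f)$ is the solution space of the linear equations \eqref{lec}, so $\dim Z(f)=n^2-\rk B$; since $Z(f)$ is a commutative subalgebra of $\k^{n\times n}$ containing $I_n$ (Proposition~\ref{pc}(1)), the condition $\dim Z(f)=1$ is the same as $Z(f)=\k I_n\cong\k$, hence the same as $\rk B=n^2-1$. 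For (2)$\Leftrightarrow$(4): if $Z(f)\cong\k$ then $Z(f)$ is semisimple, so $f$ is not an LDS form by Theorem~\ref{cns}, and $\dim Z(f)=1$ forces $f$ indecomposable by Corollary~\ref{ind}; conversely an indecomposable non-LDS form has semisimple center by Theorem~\ref{cns} and hence $\dim Z(f)=1$ by Corollary~\ref{ind}, i.e.\ $Z(f)\cong\k$.

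The substance is (1)$\Leftrightarrow$(2), and here Lemma~\ref{cj} is the engine. The first observation is that the nondegeneracy of $f$ makes $\frac{\partial f}{\partial x_1},\dots,\frac{\partial f}{\partial x_n}$ linearly independent — immediate from \eqref{par} and the definition of nondegeneracy — so $\dim E(f)=n$. Next I would rephrase the reconstruction condition through $E(f)$ (as recalled just before the theorem, $E(\cdot)$ is the degree $d-1$ graded piece of $J(\cdot)$, so $J(g)=J(f)$ iff $E(g)=E(f)$). Given $g\in\V$ with $E(g)=E(f)$, both spaces are $n$-dimensional, so the bases $\bigl(\frac{\partial g}{\partial x_i}\bigr)_i$ and $\bigl(\frac{\partial f}{\partial x_i}\bigr)_i$ of $E(f)$ are related by a unique invertible $A$ with $\bigl(\frac{\partial g}{\partial x_1},\dots,\frac{\partial g}{\partial x_n}\bigr)=\bigl(\frac{\partial f}{\partial x_1},\dots,\frac{\partial f}{\partial x_n}\bigr)A$, and Lemma~\ref{cj} forces $A\in Z(f)$. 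Conversely each invertible $A\in Z(f)$ yields, by Lemma~\ref{cj}, such a $g$ (unique, since two forms with equal partials differ by a constant), and then $E(g)=E(f)A=E(f)$. Thus the forms $g$ with $E(g)=E(f)$ are parametrized by the invertible elements of $Z(f)$, and — using linear independence of the partials of $f$ once more — $g=\lambda f$ holds iff $A=\lambda I_n$. Consequently $f$ is reconstructible from $J(f)$ if and only if every invertible element of $Z(f)$ is a scalar matrix.

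It then remains to identify this last condition with (2). One direction is trivial; for the other, given $A\in Z(f)$, since $\k$ is infinite and $\k I_n\subseteq Z(f)$, I can pick $t\in\k$ outside the finite set of eigenvalues of $-A$, so that $A+tI_n\in Z(f)$ is invertible, hence scalar, hence $A$ itself is scalar; therefore $Z(f)=\k I_n\cong\k$. The step I expect to require the most care is not any single calculation but the bookkeeping in (1)$\Leftrightarrow$(2): keeping straight that the lifted form $g$ is uniquely determined by $A$, that nondegeneracy is exactly what turns "$g=\lambda f$" into "$A=\lambda I_n$" (and makes $\lambda\in\k^{*}$), and the $A\rightsquigarrow A+tI_n$ perturbation that reduces the invertible case to arbitrary elements of $Z(f)$. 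Each ingredient is elementary, but the argument is clean only if all three are in place.
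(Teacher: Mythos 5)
Your proof is correct and follows essentially the same route as the paper: conditions (3) and (4) are tied to (2) via $\dim Z(f)=n^2-\rk B$, Theorem~\ref{cns} and Corollary~\ref{ind}, and the substantive equivalence (1)$\Leftrightarrow$(2) is driven by Lemma~\ref{cj} together with the linear independence of the partials coming from nondegeneracy. You are in fact slightly more complete than the paper, which stops after showing every \emph{invertible} element of $Z(f)$ is scalar; your perturbation $A\rightsquigarrow A+tI_n$ (legitimate here since $\k$ is taken algebraically closed in this section, hence infinite) makes explicit the step from that statement to $Z(f)\cong\k$.
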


\begin{proof}
(2) $\Rightarrow$ (1) is obvious. It remains to prove (1) $\Rightarrow$ (2) as the rest are direct consequences of Theorems \ref{mt} and \ref{cns}. Assume $f$ can be reconstructed from $J(f).$ Let $A \in Z(f)$ be invertible and then let $g \in \V$ be the polynomial lifted by $\left(\frac{\partial f}{\partial x_1},\cdots,\frac{\partial f}{\partial x_n}\right)A.$ This is possible thanks to Lemma \ref{cj}. Then clearly $E(g)=E(f),$ so $g=\lambda f$ for some $\lambda \in \k^*.$ It follows that $A=\lambda I,$ i.e., a scalar matrix.
\end{proof}

In another words, if a nondegenerate form $f \in \V$ can not be reconstructed from $J(f),$ then $Z(f)$ must have either a nontrivial idempotent or a nontrivial nilpotent element, i.e. $f$ is either a direct sum or an LDS form. In particular, if $Z(f)$ has a nontrivial nilpotent element, then $f$ has a singularity of multiplicity $d-1$ due to Theorem \ref{cns} and Lemma \ref{lss}. As direct corollaries to Theorems \ref{cns}, \ref{criterion} and \ref{jacobian}, we can easily recover the main result of Wang \cite[Theorem 1.1]{w} and the related result of Fedorchuk \cite[Theorem 3.2]{f1}. Moreover, our criterion for the reconstructibility of a higher degree form from its Jacobian ideal amounts to computing the rank of a matrix.

Finally we provide an algorithm, purely in terms of linear algebra, for the direct sum decompositions of any higher degree forms. This can be obtained by slightly extending the algorithm for forms with semisimple centers, again by the Jordan decomposition theorem.

\begin{algorithm}\label{algo}
Take an arbitrary $f \in \V.$ Denote the associated symmetric tensor by $A.$

\emph{Step 1.} %Degeneracy test for any $f \in \V.$
Compute $\rk \{ A_1, \cdots, A_n \}.$ If it is $n,$ then $f$ is nondegenerate and continue; otherwise, take a linearly independent set of maximal size, reduce variables and make $f$ nondegenerate in lower dimension situation, then continue.

\emph{Step 2.} %Compute Z(f) and get a basis.
Solve the linear equations \eqref{lec} and get a basis $(P_i)_{1 \le i \le \dim Z(f)}$ of the center $Z(f).$ The $n^d \times n^2$ linear system is very special and can be vastly simplified. First, one can select a basis $H_1, \cdots, H_t$ of the subspace spanned by $\{A^{(i_3\cdots i_d)} \mid 1 \le i_3, \dots, i_d \le n \}$ and reduce the size of the system. Second, one can write $H_k=U_k^T D_k U_k$ where $D_k$ is diagonal and then highly simplify the associated linear equation.

\emph{Step 3.} %Take the semisimple part of Z(f) by Jordan decomposition.
Take the semisimple part $Q_i$ of each $P_i$ by its Jordan decomposition. It is well known that $Q_i$ is a polynomial of $P_i$ and so an element of $Z(f).$ Then diagonalize $(Q_i)_{1 \le i \le \dim Z(f)}$ simultaneously, and get a set of diagonal matrices $(\alpha_i)_{1 \le i \le \dim Z(f)}.$ Let $Z'$ be the $\k$-algebra generated by the $\alpha_i$'s.

\emph{Step 4.} %Compute Z(f)/Rad.
Determine a complete set of primitive orthogonal idempotents of $Z'$ which are linearly spanned by the $\alpha_i$'s. Write each $\alpha_i$ as a row vector and put them into a matrix $C.$ Then a set of primitive orthogonal idempotents are obtained by a row echelon reduction of $C.$ By the reverse simultaneous conjugation of Step 3, get a complete set of primitive orthogonal idempotents, denoted by $(\epsilon_j)_{1 \le j \le \dim Z'},$ of $Z(f).$

\emph{Step 5.} %Decompose $f$ accordingly.
Decompose the higher degree form $f$ according to the complete set $(\epsilon_j)_{1 \le j \le \dim Z'}$ of primitive orthogonal idempotents.
\end{algorithm}

\begin{remark}
The previous algorithm shows that the direct sum decompositions of higher degree forms can be boiled down to some standard tasks of linear algebra, specifically the computations of eigenvalues and eigenvectors for which there are efficient algorithms and well-established softwares.
%Our algorithm seems more elementary than those in some previous works \cite{bbkt, f1} which involve sophisticated tools of Gr\"obner bases and associated forms.
\end{remark}

\section{Examples}
In the following we provide some examples to elucidate the proposed criteria and algorithms. Examples 4.1 and 4.2 are taken form \cite{f1}. It turns out they can be easily worked out by hand without involving computers. Example 4.3 was considered in \cite{bbkt,f1,sh} by apolarity. Example 4.4 is about Cayley's hyperdeterminant and seems new. Example 4.5 is the Keet-Saxena cubic form which provides the best upper bound of the dimension of centers so far. These seemingly very complicated forms can all be handled by our approach of looking at the center without difficulty. Note that the examples are over not necessarily algebraically closed ground fields. Thus our approach works as well for direct sum decompositions of forms over arbitrary fields.

\begin{example}\cite[Example 6.4]{f1}
Consider the rational form $f(x_1, x_2, x_3)= x_1^3 +3x_1^2x_2 + 3x_1x_2^2 + 2x_2^3 + 3x_1^2x_3 + 6x_1x_2x_3 +4x_2^2x_3 + 3x_1x_3^2 +4x_2x_3^2 + 2x_3^3 \in \Q[x_1, \ x_2, \ x_3].$ Let
\[ A^{(1)}=
\left(
\begin{array}{ccc}
 1 & 1  & 1  \\
 1 &  1 & 1  \\
 1 & 1  &  1
\end{array}
\right), \quad
 A^{(2)}=
\left(
\begin{array}{ccc}
1  & 1  & 1  \\
1  & 2  & \frac{4}{3}  \\
 1 & \frac{4}{3}  &  \frac{4}{3}
\end{array}
\right), \quad
A^{(3)}=
\left(
\begin{array}{ccc}
1  & 1  & 1  \\
1  &  \frac{4}{3} & \frac{4}{3}  \\
 1 & \frac{4}{3}  &   2
\end{array}
\right).
\]
Then by direct calculation, $Z(f)=\{ X \in \Q^{3 \times 3} \mid A^{(i)}X=X^TA^{(i)}, \ 1 \le i \le 3 \}=\bigoplus_{1 \le i \le 3} \Q X_i,$ where
\[ X_1=
\left(
\begin{array}{ccc}
 1 & 1  & 1  \\
 0 &  0 & 0  \\
 0 &  0 & 0
\end{array}
\right), \quad
 X_2=
\left(
\begin{array}{ccc}
0  & -1  & -1  \\
 0 &  1 & 0   \\
 0 &  0 & 1
\end{array}
\right), \quad
X_3=
\left(
\begin{array}{ccc}
0  & 1  & 3  \\
 0 &  0 & 1  \\
 0 &  -1 & -4
\end{array}
\right).
\]
It is clear that $X_1$ and $X_2$ are a pair of orthogonal idempotents. According to the proof of Proposition \ref{pc}, take the change of variables \[ y_1=x_1+x_2+x_3, \quad y_2=x_2, \quad y_3=x_3 \] and decompose the form as \[y_1^3+(y_2^3+y_2^2y_3+y_2y_3^2+y_3^3).\] Let $g(y_2,y_3)=y_2^3+y_2^2y_3+y_2y_3^2+y_3^3.$ Then one can read from the $X_i$'s that
$$Z(g)=\Q
\left(
\begin{array}{cc}
1 & 0 \\
0 & 1 \\
\end{array}
\right)
\oplus \Q
\left(
\begin{array}{cc}
0 & 1 \\
-1 & -4 \\
\end{array}
\right) \cong \Q[\sqrt{3}].$$ It follows by Proposition \ref{pc} that $g$ is indecomposable over $\Q$ as $\Q[\sqrt{3}]$ is a field. However it is not absolutely indecomposable. Over any field extension $K/\Q$ with $\sqrt{3} \in K,$ one has easily $Z(g) \otimes _\Q K \cong K \times K$ and $g$ can be further decomposed as
\[
2\left(\frac{3+\sqrt{3}}{6}y_2+\frac{3-\sqrt{3}}{6}y_3\right)^3 + 2\left(\frac{3-\sqrt{3}}{6}y_2 +\frac{3+\sqrt{3}}{6}y_3\right)^3 .
\]
To summarize, $f$ can be diagonalized over $K$ as
\[ f(x_1,x_2,x_3)=\left(x_1 + x_2 +x_3\right)^3 + 2\left(\frac{3+\sqrt{3}}{6}x_2+\frac{3-\sqrt{3}}{6}x_3\right)^3 + 2\left(\frac{3-\sqrt{3}}{6}x_2 +\frac{3+\sqrt{3}}{6}x_3\right)^3 .\]
\end{example}

\begin{example}\cite[Example 6.6]{f1}
Consider the following rational quaternary quartic
\begin{equation*}
\begin{aligned}
f=&x_1^4+4x_1^3x_2+6x_1^2x_2^2+4x_1x_2^3+2x_2^4+8x_1^3x_3+24x_1^2x_2x_3+24x_1x_2^2x_3+8x_2^3x_3+24x_1^2x_3^2\\
&+48x_1x_2x_3^2+24x_2^2x_3^2+32x_1x_3^3+32x_2x_3^3+17x_3^4-12x_1^3x_4-36x_1^2x_2x_4-36x_1x_2^2x_4\\
&-12x_2^3x_4-72x_1^2x_3x_4-144x_1x_2x_3x_4-72x_2^2x_3x_4-144x_1x_3^2x_4-144x_2x_3^2x_4-96x_3^3x_4\\
&+54x_1^2x_4^2+108x_1x_2x_4^2+54x_2^2x_4^2+216x_1x_3x_4^2+217x_2x_3x_4^2+216x_3^2x_4^2-108x_1x_4^3\\
&-108x_2x_4^3-216x_3x_4^3+82x_4^4.
\end{aligned}
\end{equation*}
Let $A$ be the associated symmetric tensor of $f$ and for all $1 \le i_3, i_4 \le 4$ let $A^{(i_3i_4)}=(a_{i_1i_2i_3i_4})_{1 \le i_1, i_2 \le 4}.$ Note that
\begin{eqnarray*}
% \nonumber to remove numbering (before each equation)
  A^{(22)}-A^{(11)} = \left(
                          \begin{array}{cccc}
                            0 & 0 & 0 & 0 \\
                            0 & 1 & 0 & 0  \\
                            0 & 0 & 0 & 0  \\
                            0 & 0 & 0 & 0  \\
                          \end{array}
                        \right),&
   \quad
  12A^{(23)}-24A^{(11)} = \left(
                              \begin{array}{cccc}
                                0 & 0 & 0 & 0 \\
                                0 & 0 & 0 & 0  \\
                                0 & 0 & 0 & 0  \\
                                0 & 0 & 0 & 1  \\
                              \end{array}
                            \right),&
   \\
  12A^{(24)}+36A^{(11)} = \left(
                              \begin{array}{cccc}
                                0 & 0 & 0 & 0 \\
                                0 & 0 & 0 & 0  \\
                                0 & 0 & 0 & 1  \\
                                0 & 0 & 1 & 0  \\
                              \end{array}
                            \right),&
  \quad
  A^{(33)}-4A^{(11)}=  \left(
                              \begin{array}{cccc}
                                0 & 0 & 0 & 0 \\
                                0 & 0 & 0 & 0  \\
                                0 & 0 & 1 & 0  \\
                                0 & 0 & 0 & 0  \\
                              \end{array}
                            \right),&
  \\
  12A^{(34)}+72A^{(11)}= \left(
                              \begin{array}{cccc}
                                0 & 0 & 0 & 0 \\
                                0 & 0 & 0 & 1  \\
                                0 & 0 & 0 & 0  \\
                                0 & 1 & 0 & 0  \\
                              \end{array}
                            \right),&
  \quad
  12A^{(44)}-144A^{(23)}+180A^{(11)}=\left(
                              \begin{array}{cccc}
                                0 & 0 & 0 & 0 \\
                                0 & 0 & 1 & 0  \\
                                0 & 1 & 0 & 0  \\
                                0 & 0 & 0 & 0  \\
                              \end{array}
                            \right).&
\end{eqnarray*}
With the preceding observation it is easy to compute that $Z(f)=\{ X \in \Q^{4 \times 4} \mid A^{(ij)}X=X^TA^{(ij)}, \ 1 \le i,j \le 4 \} =\Q X_1 \oplus \Q X_2$ where
\[
X_1=\left(
  \begin{array}{cccc}
    1 & 1 & 2 & -3 \\
    0 & 0 & 0 & 0 \\
    0 & 0 & 0 & 0 \\
    0 & 0 & 0 & 0 \\
  \end{array}
\right),
\quad
X_2=\left(
      \begin{array}{cccc}
        0 & -1 & -2 & 3 \\
        0 & 1 & 0 & 0 \\
        0 & 0 & 1 & 0 \\
        0 & 0 & 0 & 1 \\
      \end{array}
    \right).
\]
Clearly $X_1$ and $X_2$ are a pair of primitive orthogonal idempotents and $Z(f) \cong \Q \times \Q.$ As the previous example, we have the following direct sum decomposition
\[ f=(x_1+x_2+2x_3-3x_4)^4+(x_2^4+x_3^4+x_2x_3x_4^2+x_4^4). \]
In conclusion, $f$ is the direct sum of two absolutely indecomposable forms over $\Q.$
\end{example}

\begin{example}[Determinant-like polynomials] \label{det}
Let $n \ge 3.$ Consider the determinant-like polynomial
\[ D:=\sum_{\sigma \in S_n} c_{\sigma} x_{1, \sigma(1)} x_{2, \sigma(2)} \cdots x_{n, \sigma(n)}, \]
where $c_\sigma \in \k^*.$ Denote $D_{ij,kl}:=\frac{\partial^2 D}{\partial x_{ij} \partial x_{kl}}.$ Arrange the indeterminates $x_{ij}$ lexicographically via their indices. Assume $A=(a_{st, uv}) \in Z(D).$ Take an arbitrary pair of indices $ij$ and $kl$ with $i \ne k, \ j \ne l.$ Then by \eqref{hessian} we have
\[ \sum_{st} D_{ij,st} a_{st,kl} = \sum_{uv} D_{kl,uv} a_{uv,ij}. \] Note that $D_{ij,st}=D_{kl,uv} \ne 0$ if and only if $s=k, \ t=l, \ u=i, \ v=j.$ It follows that $a_{ij,ij}=a_{kl,kl}$ and $a_{st,kl}=0$ for any other $st$ with $s \ne i, \ t \ne j.$ By varying $ij$ and $kl$, it can be checked that $A$ is a scalar matrix. That is, $Z(D) \cong \k$ and thus $D$ is absolutely indecomposable. One can also consider pfaffian-like and hafnian-like polynomials in the same manner.
\end{example}

\begin{example}[Cayley's hyperdetermiant]
The well known Cayley's hyperdetermiant (of $2\times2\times2$ matrix) is the following $8$-variate degree $4$ form:
\begin{equation*}
\begin{array}{ccl}
f(x_1,\cdots,x_8)&=&x_1^2x_8^2+x_2^2x_7^2+x_3^2x_6^2+x_4^2x_5^2\\
                 &&-2(x_1x_2x_7x_8+x_1x_3x_6x_8+x_1x_4x_5x_8+x_2x_3x_6x_7+x_2x_4x_5x_7+x_3x_4x_5x_6)\\
                 &&+4(x_1x_4x_6x_7+x_2x_3x_5x_8).
\end{array}
\end{equation*}
Let $A=(a_{i_1 i_2 i_3 i_4})$ be the associated symmetric $4$-tensor of $f$ and let $A^{(i,j)}$ be the $8\times 8$ matrix $(a_{i_1 i_2 i j})_{1 \le i_1, i_2 \le 8}.$ By $E_{i,j}$ we denote the $8 \times 8$ matrix with $(i,j)$-entry $1$ and other entries $0.$ Suppose $T=(T_{ij}) \in Z(f).$ We have the following observations.

\begin{itemize}
  \item $A^{(i,i)}=E_{9-i,9-i}$ for all $1 \le i \le 8.$ By \eqref{hessian}, it follows easily that the matrix $T$ is diagonal, written simply as $\operatorname{diag}(t_1,t_2,\dots,t_8)$;
  \item $A^{(1,j)}=-\frac{1}{6}(E_{9-j,8}+E_{8,9-j})$ for $j=2,3,4$. It follows that $t_5=t_6=t_7=t_8$;
  \item $A^{(k,8)}=-\frac{1}{6}(E_{9-k,1}+E_{1,9-k})$ for $j=4,6,7$. It follows that $t_1=t_2=t_3=t_5$;
  \item $A^{(5,8)}=-\frac{1}{6}(E_{4,1}+E_{1,4})+\frac{1}{3}(E_{3,2}+E_{2,3})$. It follows that $t_1=t_4$.
\end{itemize}
Thus the matrix $T$ must be a scalar matrix. Hence Cayley's hyperdetermiant is a central form and so absolutely indecomposable.
\end{example}

%\begin{example}[Elementary symmetric functions]
%
%
%\end{example}

\begin{example}[The Keet-Saxena cubic forms \cite{keet, s}]
Let $f=\sum\limits_{i=1}^{n} a_{ii}X_i^2+\sum\limits_{1\leq i<j\leq n}2a_{ij}X_iX_j$ be a polynomial in the indeterminates $a_{ij},X_i$ with $1\leq i \leq j\leq n$. Then $f$ is a cubic form in $\frac{n(n+3)}{2}$  variables. Compute the second-order partial derivatives of $f$ to get
\[
\frac{\partial^2f}{\partial a_{ij}\partial a_{i'j'}}=0, \quad
\frac{\partial^2f}{\partial X_i \partial X_j}=2a_{ij}, \quad
\frac{\partial^2f}{\partial a_{ij} \partial X_k}=\left\{
	\begin{aligned}
	0,&  & \ \ k\neq i,j; \\
	2X_j, &  & k=i; \\
	2X_i, &  & k=j.
	\end{aligned}
	\right.
\]
Arrange the $a_{ij}$'s by lexicographic order. Then the Hessian matrix of $f$ looks like
$$H=
	\left(
	\begin{array}{c c}
	0& X \\
	X^T& A
	\end{array}
	\right)$$
where $X$ is an $\frac{n(n+1)}{2}\times n$ matrix, $X^T$ is the transpose of $X$ and $A$ is the generic $n\times n$ symmetric matrix.
For any $P \in Z(f),$ partition it
$\left(
	\begin{array}{c c}
	P_1& P_2 \\
	P_3& P_4
	\end{array}
	\right)$
as $H$. As $HP=\left(
	\begin{array}{c c}
	XP_3& XP_4 \\
	X^tP_1+AP_3& X^tP_2+AP_4
	\end{array}
	\right)$
is symmetric and $A$ is generic, it follows easily that $P_3=0$ and $\left(
	\begin{array}{c c}
	P_1& 0\\
	0& P_4
	\end{array}
	\right)$ is a scalar matrix.
Note that the $(k_1, k_2)$-entry of $X^tP_2$ is
$$2(X_1P_{1k_1, k_2}+\cdots+X_{k_1}P_{k_1k_1, k_2}+X_{k_1+1}P_{k_1\ k_1+1, k_2}+\cdots +X_nP_{k_1n,
k_2}),$$ where $P_2=(P_{ij, k})_{1\leq i \leq j\leq n, \ 1\leq k\leq n}$. Then $X^tP_2$ is symmetric if and only if $$\sum\limits_{i\leq k_1} P_{ik_1,k_2}X_i+\sum\limits_{i>k_1} P_{k_1i,k_2}X_i=\sum\limits_{i\leq k_2} P_{ik_2,k_1}X_i+\sum\limits_{i>k_2}P_{k_2i,k_1}X_i$$ for all $1\leq k_1 \ne k_2 \leq n.$ By comparing the coefficients of $X_i'$s, we have that
\begin{itemize}
	\item if $i\leq \min\{k_1,k_2\}$, then $P_{ik_1,k_2}=P_{ik_2,k_1}$;
	\item if $k_1<i<k_2$, then $P_{k_1i,k_2}=P_{ik_2,k_1}$;
	\item if $k_2<i<k_1$, then $P_{ik_1,k_2}=P_{k_2i,k_1}$;
	\item if $i\geq \max\{k_1,k_2\}$, then $P_{k_1 i,k_2}=P_{k_2 i,k_1}$.	
\end{itemize}
Therefore, we have  $P_{i_1j_1,k_1}=P_{i_2j_2,k_2}$ if and only if the triples $\{i_1,j_1,k_1\}=\{i_2,j_2,k_2\}$ counting with the multiplicities, and if and only if $X_{i_1}X_{j_1}X_{k_1}=X_{i_2}X_{j_2}X_{k_2}$ as monomials. It is well known that the number of degree $3$ monomials in $n$ variables is $n+2 \choose 3$. Hence we have $\mathrm{dim} Z(f)=1+C_{n+2}^3=1+\frac{(n+2)(n+1)n}{6}$. Moreover, $Z(f)$ is a local algebra with square zero radical. Therefore, the Keet-Saxena cubic $f$ is an indecomposable LDS form.
\end{example}

\begin{remark}
It was conjectured in \cite{os1} that $\dim Z(f) \le n$ for all nondegenerate $f \in \V.$ By Theorem \ref{mt}, the dimension conjecture of O'Ryan-Shapiro holds for almost all nondegenerate forms. Note that the Keet-Saxena cubic forms have dimension of magnitude $O(n^{\frac{3}{2}}),$ so are counter examples. In \cite{s} Saxena also tried to amend the O'Ryan-Shapiro conjecture as: $\dim Z(f) \le (d-1)n$ for all nondegenerate $f \in \V.$ As a matter of fact, this is also disproved by the same Keet-Saxena cubic forms.
\end{remark}

\end{document}